\documentclass{amsart}

\newtheorem{theorem}{Theorem}[section]
\newtheorem{lemma}[theorem]{Lemma}

\theoremstyle{definition}

\theoremstyle{remark}
\newtheorem{remark}[theorem]{Remark}

\numberwithin{equation}{section}

\usepackage{amssymb}
\usepackage{color}
\usepackage{tikz}
\usepackage{mathrsfs}

\usepackage{comment}

\usepackage[figuresright]{rotating}
\usepackage{amsmath}
\usepackage{hyperref}
\usepackage{amsthm}
\usepackage{bm}

\usepackage{fancyhdr}

\usepackage[margin=0.8in]{geometry}

\def\d{\partial}
\def\ddj{\dot \Delta_j}
\def\ddjj{\dot \Delta_{j'}}

\def\dB{{\dot {B}}}

\renewcommand{\div}{\mbox{\rm div}\;\!}
\def\cX{{\mathcal X}}

\def\cC{{\mathcal C}}
\def\tL{\widetilde{L}}
\def\cL{{\mathcal L}}

\def\bu{{\bm{u} }}

\def\bz{{\bm{z} }}
\def\bx{\bm{x}}
\def\bv{{\bm{v} }}

\newcommand\R{\mathbb{R}}

\newcommand\Z{\mathbb{Z}}

\begin{document}

\pagestyle{fancy}
\fancypagestyle{plain}{}
\fancyhf{}
\renewcommand{\footrulewidth}{0mm}
\renewcommand{\headrulewidth}{0mm}
\fancyhead[LE]{\thepage}
\fancyhead[RO]{\thepage}

\title{the global well-posedness of the multi-dimensional compressible Euler system with damping in the $L^p$ critical Besov spaces for $p<2$}

\author{Jianzhong Zhang}

\address{School of Mathematics and Information Science, Shandong Technology and Business University, Yantai, Shandong, 264005,P. R. China}

\email{zhangjz\_91@sdtbu.edu.cn}

\author{Ying Sui}

\address{School of Mathematics and Information Science, Shandong Technology and Business University, Yantai, Shandong, 264005,P. R. China}
\email{suiying@sdtbu.edu.cn}

\author{Xiliang Li*}
\address{School of Mathematics and Information Science, Shandong Technology and Business University, Yantai, Shandong, 264005,P. R. China}

\email{lixiliang@amss.ac.cn}

\subjclass[2010]{35Q35; 76N10}


\keywords{Compressible Euler system with damping, global unique solvability, $L^p$ framework}

\begin{abstract}
In this paper, we study the Cauchy's problem of the compressible Euler system with damping and establish the global-in-time well-posedness in $L^p$-type critical Besov spaces for $1\leq p<2$. To achieve it, a new product estimate is established in $L^2$-$L^p$ hybrid Besov spaces.
\end{abstract}

\maketitle

\footnotetext{$^*$ Corresponding author.}

\section{Introduction}
This work studies the compressible Euler equations with linear damping, a model describing gas flow through a porous medium where the solid matrix exerts a frictional force proportional to and opposite to the fluid momentum. The governing system is given by
\begin{eqnarray}\label{1.1}
\left\{\aligned
&\d_t \rho+\div (\rho \bu)=0, \\
&\d_t (\rho \bu)+\nabla \cdot (\rho \bu\otimes \bu)+\nabla P(\rho)+\alpha\rho \bu=0
\endaligned\right.
\end{eqnarray}
for $t\geq0,\ \bx\in \R^d(d\geq 1)$ and a damping coefficient $\alpha>0$
. Here $\rho\in\mathbb{R}^{+}$, $\bu=(u^1, u^2,\cdots, u^d)^\top$ ($\top$ represents transpose) denote the density and velocity of fluid flow, respectively. The pressure function $P=P(\rho)$ is assumed to be smooth around the constant reference density $\bar{\rho}>0$.

There are lots of mathematical results about the existence and asymptotic behavior of system \eqref{1.1} in sobolev space (see \cite{HL,HuangFM2,Wangdehua,Sui,Tan3,XY} ). In critical Besov space, Fang and Xu \cite{FXu} (with improvements in \cite{Jiu}) studied the existence and asymptotic behavior of classical solutions. Later, Xu and Wang \cite{XuWang} justified the relaxation convergence from \eqref{1.1} to the porous medium equation. And also, there are some results on hyperbolic system for balance laws including \eqref{1.1} (see \cite{XuKawaARM14,XuKawaARM15}). Recently, Crin-Barat and Danchin \cite{BaratDanchin,BaratDanchin2,BaratDanchin3} improved those results \cite{XuKawaARM14,XuKawaARM15,XuWang} in the $L^2$-$L^p$ hybrid Besov spaces with $2\leq p\leq\max\{4,\frac{2d}{d+2}\}$, where the low frequencies are bounded in $L^p$-type spaces and the high frequencies in $L^2$-type spaces with a specific linearity assumption. And then, the first author and Xu \cite{XuZhang} remove the assumption and to show that the results of \cite{BaratDanchin,BaratDanchin2} hold true for the general pressure function. Very recently, Crin-Barat and Song \cite{BSong} extend the work of \cite{BaratDanchin,BaratDanchin2,XuZhang} to the case $p\in[2,\infty)$. And a natural question is how about $p<2$?

Introducing a new unknown called ``enthalpy"
$n(\rho)\triangleq \int_1^{\rho} \frac{P'(s)}{s} ds.$ System \eqref{1.1} can be rewritten as
\begin{eqnarray}\label{1.2}
\left\{\aligned
&\d_t n + \bv \cdot  \nabla n + P'(\bar{\rho})\div \bv + G(n)\div \bv = 0 \\
&\d_t \bv+ \bv\cdot \nabla \bv+\nabla n= -\frac{1}{\varepsilon} \bv
\endaligned\right.
\end{eqnarray}
with the initial data
\begin{eqnarray}\label{1.2initial}
 (n,\bv)|_{t=0}=(n_0, \bv_0).
\end{eqnarray}
Here $P'(\bar{\rho})>0$ for $\bar{\rho}>0$, $\bar{\rho}=P'(\bar{\rho})=1$ and the composite function $G(n)$ is smooth.

Define $J_\varepsilon=-[\log_2\varepsilon]+k$ being the threshold between high and low frequencies with a suitable integer $k$ is to be determined later, and the space $E_T^{J_\varepsilon}$ as follows
\begin{equation*}
\begin{split}
&E_T^{J_\varepsilon}=\bigg\{(n,\bv)|(n^{\ell,J_\varepsilon},\bv^{\ell,J_\varepsilon})\in \cC_b([0,T];\dB_{p,1}^{\frac{d}{p}}),\
\varepsilon n^{\ell,J_\varepsilon}\in \tL_T^1(\dB_{p,1}^{\frac{d}{p}+2}),\
\bv^{\ell,J_\varepsilon}\in \tL_T^1(\dB_{p,1}^{\frac{d}{p}+1}),
\varepsilon^{-\frac{1}{2}} \bv\in \tL_T^2(\dB_{p,1}^{\frac{d}{p}}),\\
&\qquad\qquad
(\varepsilon n^{h,J_\varepsilon}, \varepsilon\bv^{h,J_\varepsilon})\in \cC_b([0,T];\dB_{2,1}^{\frac{d}{2}+1}),\
(n^{h,J_\varepsilon},\bv^{h,J_\varepsilon}) \in \tL_T^1(\dB_{2,1}^{\frac{d}{2}+1}),
\varepsilon^{-1}\bv+\nabla n\in \tL_T^1(\dB_{p,1}^{\frac{d}{p}})
\bigg\}.
\end{split}
\end{equation*}
When $T=\infty$ we also use $E_\infty^{J_\varepsilon}$ for convenience.
For the reader's convenience, We introduce the notations $\|\cdot\|_{\dB_{q_1,1}^s}^{h,J_{\varepsilon}}$ and
$\|\cdot\|_{\dB_{q_2,1}^s}^{\ell,J_{\varepsilon}}$ to denote Besov semi-norms with respect to the threshold $J_{\varepsilon}$, that is,
\begin{equation}\label{DefHLB}
\|f\|_{\dB_{q_1,1}^{s_1}}^{h,J_{\varepsilon}}\triangleq \sum_{j\geq J_{\varepsilon}}
2^{s_1j}\|\ddj f\|_{q_1}\quad \mbox{and}\quad
\|f\|_{\dB_{q_2,1}^{s_2}}^{\ell,J_{\varepsilon}}\triangleq \sum_{j\leq J_{\varepsilon}-1}
2^{s_2j}\|\ddj f\|_{q_2}.
\end{equation}
It is not difficult to deduce that for all $\sigma_0>0$,
\begin{equation}\label{HLEst}
\|f\|_{\dB_{q_1,1}^{s_1}}^{h,J_{\varepsilon}}
\leq
2^{-\sigma_0J_{\varepsilon}}\|f\|_{\dB_{q_1,1}^{s_1+\sigma_0}}^{h,J_{\varepsilon}}\quad \mbox{and}\quad
\|f\|_{\dB_{q_1,1}^{s_1}}^{\ell,J_{\varepsilon}}
\leq
2^{\sigma_0J_{\varepsilon}}\|f\|_{\dB_{q_1,1}^{s_1-\sigma_0}}^{\ell,J_{\varepsilon}}.
\end{equation}

The main goal of this paper is to broaden the assumption on $p$ and to show that the results of \cite{BaratDanchin,BaratDanchin2,XuZhang} hold true for the case $p<2$. Our main result is stated as follows.
\begin{theorem}\label{thm}
Assume $1\leq p <2$, $d\geq 1$ and the pressure $P$ satisfies $P'(\bar{\rho})>0$ for $\bar{\rho}>0$. There is a small constant $\delta_1>0$ such that if
$
\|(n_0,\bv_0)\|_{\dB_{p,1}^{\frac{d}{p}}}^{\ell,J_\varepsilon}
+\varepsilon\|(n_0,\bv_0)\|_{\dB_{2,1}^{\frac{d}{2}+1}}^{h,J_\varepsilon}
\leq \delta_1,
$
then the Cauchy problem \eqref{1.2}-\eqref{1.2initial} admits a uniform global unique solution in $E_\infty^{J_\varepsilon}$ for all $\varepsilon>0$.
\end{theorem}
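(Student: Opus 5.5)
We follow the standard scheme: a priori estimates in $E_T^{J_\varepsilon}$, local existence, and a continuity argument. First we rescale. Setting $(n,\bv)(t,x)=(\tilde n,\tilde\bv)(t/\varepsilon, x/\varepsilon)$ reduces \eqref{1.2} to the case $\varepsilon=1$. Since every norm in $E_T^{J_\varepsilon}$ and in the smallness condition is critical and carries the correct power of $\varepsilon$, it suffices to prove the result for $\varepsilon=1$ with threshold $J_1=k$. All bounds will then be uniform in $\varepsilon$. Local existence in the high-regularity class follows from classical symmetric hyperbolic theory after symmetrizing with the weight $1+G(n)$ (or $P'$). We therefore concentrate on the global a priori estimate
\[
X(t)\lesssim X_0 + X(t)^2 ,
\]
where $X(t)$ denotes the $E_t^{J_1}$ norm and $X_0$ the quantity in the hypothesis. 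The bootstrap then closes for $\delta_1$ small, and uniqueness follows from a stability estimate in a space with one less derivative.

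\textbf{Low frequencies ($L^p$).} We introduce the effective velocity $\bz=\bv+\nabla n$, the damped mode. For $j\le J_1-1$ the linearized system becomes
\[
\d_t n-\Delta n=-\div\bz+\text{NL},\qquad \d_t\bz+\bz=\nabla\div\bv\cdot(\dots)+\text{NL}.
\]
A parabolic estimate for $n$ and a damped estimate for $\bz$ are performed blockwise in $L^p$ with $\ddj$. The $L^p$ parabolic estimate holds for all $p\ge1$. These give $n^\ell\in \tL^\infty(\dB^{d/p}_{p,1})\cap\tL^1(\dB^{d/p+2}_{p,1})$ and $\bz\in\tL^1(\dB^{d/p}_{p,1})$. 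The linear coupling term $\nabla\div \bz$ is absorbed at low frequency because it carries two derivatives and $2^{2J_1}$ is small relative to the damping when $k$ is chosen negative enough. From these we recover $\bv^\ell\in\tL^1(\dB^{d/p+1}_{p,1})$ and $\bv\in\tL^2(\dB^{d/p}_{p,1})$.

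\textbf{High frequencies ($L^2$).} For $j\ge J_1$ we run a Lyapunov functional built from the symmetrized energy plus the cross term $\eta\langle\ddj\bv,\nabla\ddj n\rangle$. This yields the $\cC_b(\dB^{d/2+1}_{2,1})$ bound and the $\tL^1(\dB^{d/2+1}_{2,1})$ damping bound. The transport terms $\bv\cdot\nabla$ are handled by the usual commutator estimates, and they require $\|\nabla\bv\|_{L^\infty}\in L^1_t$.

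\textbf{Nonlinear terms.} The main obstacle is the nonlinear estimates when $p<2$. Classical hybrid estimates for $p\ge2$ use the embedding $\dB^{d/2}_{2,1}\hookrightarrow\dB^{d/p}_{p,1}$ to place high-frequency pieces into the $L^p$ low-frequency norm. For $p<2$ this embedding goes the wrong way, since $L^2\not\hookrightarrow L^p$. We therefore need a new product estimate of the form
\[
\|(fg)^\ell\|_{\dB^{s}_{p,1}}\lesssim \|f\|_{\dB^{d/p}_{p,1}\cap\dB^{d/2}_{2,1}}\|g\|_{\dots}.
\]
We handle it through Bony's decomposition. In the paraproducts and the remainder, a high-frequency factor lying in $L^2$ is multiplied by another factor lying in $L^{q}$ with $1/p=1/2+1/q$. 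The factor in $L^q$ is then controlled by $\dB^{d/p}_{p,1}\hookrightarrow \dB^{d/q}_{q,1}$ at low frequency, with $q\ge p$, and by $\dB^{d/2}_{2,1}$ interpolation at high frequency. Moreover, the output is restricted to low frequencies $j<J_1$, so Bernstein allows passing from $L^{r}$ with $r\ge p$... Here lies the delicate point: Bernstein raises the Lebesgue index, not lowers it. We must instead keep the Hölder pairing producing exactly $L^p$ and absorb the derivative deficit using the high-frequency bound \eqref{HLEst}.

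For the quasilinear terms $G(n)\div\bv$ and $\bv\cdot\nabla n$, we additionally need composition estimates for $G(n)$ in the $L^p\cap L^2$ hybrid space. Once this product lemma is in place, every nonlinear term is bounded by $X(t)^2$. The bootstrap then gives global existence, and the uniform bounds follow since all constants are $\varepsilon$-independent after scaling.
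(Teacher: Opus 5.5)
The gap is the nonlinear step, which for $p<2$ is the whole content of the theorem. The linear architecture is fine and matches the paper's: the effective velocity $\bz$, blockwise $L^p$ heat and damped estimates below $J_\varepsilon$, the $L^2$ Lyapunov functional with a cross term above $J_\varepsilon$, and a bootstrap. Reducing to $\varepsilon=1$ is a harmless variant, but dilate by the dyadic number $2^{[\log_2\varepsilon]}$, which leaves a damping coefficient in $(\frac12,1]$. For $p\neq2$, hybrid norms whose thresholds differ by one block are not equivalent, so a non-dyadic dilation does not preserve them.

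You must bound $\bv\cdot\nabla n$, $G(n)\div\bv$ and $\bv\cdot\nabla\bv$ in $\|\cdot\|^{\ell,J_\varepsilon}_{\tL^1_T(\dB^{d/p}_{p,1})}$ by $X(t)^2$, and you stop at ``once this product lemma is in place''. That lemma is never formulated:
\begin{itemize}
\item the norm of $g$ is left as dots;
\item $\|f\|_{\dB^{d/p}_{p,1}\cap\dB^{d/2}_{2,1}}$ contains the $L^p$ norm of the high frequencies, which $X(t)$ does not control when $p<2$;
\item the hybrid composition estimate for $G(n)$ is only announced.
\end{itemize}
The mechanism you sketch does not close as stated either. $\dB^{d/p}_{p,1}\hookrightarrow\dB^{d/q}_{q,1}$ with $q=\frac{2p}{2-p}$ gives no $L^q$ bound on a low-frequency factor, because the sum over very low blocks diverges. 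So the H\"older step $L^q\cdot L^2\subset L^p$ needs either a negative-index norm on that factor or a commutator gain. Bernstein is not the obstruction for high--high interactions: $L^2\cdot L^2\subset L^1$, then $L^1\to L^p$ at low output frequency, is fine. The delicate terms are a low-frequency factor times an $L^2$ high-frequency factor with output just below $J_\varepsilon$, and none of this is carried out.

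The paper's device is the asymmetric Lemma \ref{NonClassicalProLaw1}. There $a$ is measured only in the full $\dB^{d/2}_{2,1}$ norm, which for $p<2$ is dominated by the hybrid norm. The factor $b$ is measured in $\|b\|^{\ell,J_\varepsilon}_{\dB^{s_1}_{p,1}}+2^{(s_1-\frac dp)J_\varepsilon}\|b\|^{h,J_\varepsilon}_{\dB^{d/2}_{2,1}}$. In Bony's decomposition:
\begin{itemize}
\item $T_ab$ is split into $\dot S_{j'-1}a\,\ddj\ddjj b$, bounded by $\|a\|_{L^\infty}$ and the low-frequency $L^p$ norm of $b$, and the commutator $[\ddj,\dot S_{j'-1}a]\ddjj b$, where the pairing $L^{\frac{2p}{2-p}}\times L^2\to L^p$ is used for $j'$ near $J_\varepsilon$;
\item $T_ba$ and $R[a,b]$ are bounded through $\|b\|_{\dB^{s_1-\frac d2}_{\frac{2p}{2-p},1}}\|a\|_{\dB^{d/2}_{2,1}}$.
\end{itemize}
With $G(n)$ in the $a$-slot, only the classical $\dB^{d/2}_{2,1}$ composition estimate is needed.

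Be aware that the lemma is proved only for $0<s_1<\frac d2$; the negative index $s_1-\frac d2$ is what handles $T_ba$. In deriving \eqref{NL1} and \eqref{NL3} it is used with $s_1=\frac dp>\frac d2$, where \eqref{newproduct} cannot hold. Take integers $m<j<J_\varepsilon$ with $j-m$ large, $b(x)=\psi(2^mx)$ with $\widehat\psi$ supported near $0$, and $a(x)=2^{(m-j)\frac d2}\cos(\tfrac{7}{5}\,2^{j}x_1)\psi(2^mx)$. The right-hand side is $O(1)$, but $ab=\ddj(ab)$ and $\|ab\|^{\ell,J_\varepsilon}_{\dB^{d/p}_{p,1}}\approx 2^{(j-m)(\frac dp-\frac d2)}$.

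So your remark that the $L^p$ information on the low frequencies of \emph{both} factors must enter, and hence a hybrid composition bound for $G(n)$, is well founded. But that estimate, including the low--high interactions at the threshold, is exactly what a complete proof must supply, and your proposal does not.
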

\begin{remark}
Note that $\dot{B}_{p,1}^{\frac{d}{p}}\hookrightarrow \dot{B}_{2,1}^{\frac{d}{2}}$ when $p<2$, that means the existence and uniqueness of solutions hold in a smaller space rather than escaping to the complement $\dot{B}_{2,1}^{\frac{d}{2}}\setminus \dot{B}_{p,1}^{\frac{d}{p}}$. On the other hand, \eqref{DefHLB} implies that a smaller space for $p$ corresponds to more singular and concentrated data, which naturally arises in many physical problems (such as point sources and vortex filaments).
\end{remark}

\begin{remark}
Based on Theorem \ref{thm}, the relaxation limit for the case $p<2$ can be performed by the same procedure as \cite{BaratDanchin2}.
\end{remark}

The major difficulty of proof of Theorem \ref{thm} lies in dealing with the nonlinear terms. To achieve it, a new product estimation will be developed, see Lemma \ref{NonClassicalProLaw1} below. As a matter of fact, the new tool could be applied to investigate other systems, such as the hyperbolic-parabolic chemotaxis system (see \cite{BaratShou2}) in $L^2-L^p$ framework and Navier-Stokes equations (see \cite{Danchin} ) in $L^p-L^2$ framework for $p<2$.

\section{Global a priori estimates}
In this section, we only give the key a priori estimate for the case $p<2$, which lead to the global existence and uniqueness of solutions in $E_\infty^{J_\varepsilon}$. See \cite{BShouZhang} for more details. For convenience, we use $ f\lesssim g $ to denote that there exists a generic constant $C>0$ independent on $\varepsilon$ such that $f\leq C g$ in this section.

First of all, we establish the following product estimation in $L^p-L^2$ hybrid Besov spaces.
\begin{lemma}\label{NonClassicalProLaw1}
Let $0<s_1<\frac{d}{2}$, $1\leq p<2$. Then, we have the following inequality
\begin{align}\label{newproduct}
\|ab\|_{\dB_{p,1}^{s_1}}^{\ell,J_\varepsilon}
\leq C
\left(\|b\|_{\dB_{p,1}^{s_1}}^{\ell,J_\varepsilon}+ 2^{(s_1-\frac{d}{p})J_\varepsilon}\|b\|_{\dB_{2,1}^{\frac{d}{2}}}^{h,J_\varepsilon} \right)\|a\|_{\dB_{2,1}^{\frac{d}{2}}}
\end{align}
with $C$ independent on $\varepsilon$.
\end{lemma}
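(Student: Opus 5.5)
We split $b=b^{\ell}+b^{h}$, with $b^\ell=\sum_{j\le J_\varepsilon-1}\ddj b$ and $b^h=\sum_{j\ge J_\varepsilon}\ddj b$, and estimate $\|ab^\ell\|^{\ell,J_\varepsilon}_{\dB^{s_1}_{p,1}}$ and $\|ab^h\|^{\ell,J_\varepsilon}_{\dB^{s_1}_{p,1}}$ separately.

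\emph{Low part of $b$.} We write the Bony decomposition $ab^\ell=T_a b^\ell+T_{b^\ell}a+R(a,b^\ell)$ and bound each piece in $\dB^{s_1}_{p,1}$. Since $a\in\dB^{d/2}_{2,1}\hookrightarrow L^\infty$, we get
\[
\|T_ab^\ell\|_{\dB^{s_1}_{p,1}}\lesssim\|a\|_{L^\infty}\|b^\ell\|_{\dB^{s_1}_{p,1}}.
\]
For $T_{b^\ell}a$ and $R(a,b^\ell)$ we pair $L^p$ against $L^2$ through H\"older with $\frac1p=\frac12+\frac1{r}$, where $\frac1r=\frac1p-\frac12\in(0,\frac12]$. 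This requires $b^\ell$ in an $L^r$-based space, and the embedding $\dB^{s_1}_{p,1}\hookrightarrow\dB^{s_1-d(\frac1p-\frac1r)}_{r,1}=\dB^{s_1-\frac d2}_{r,1}$ supplies it, with regularity index $s_1-\frac d2<0$. For the paraproduct,
\[
\|T_{b^\ell}a\|_{\dB^{s_1}_{p,1}}\lesssim\|b^\ell\|_{\dB^{s_1-\frac d2}_{r,1}}\|a\|_{\dB^{\frac d2}_{2,1}},
\]
which is valid because the low-frequency index $s_1-\frac d2$ is negative. For the remainder, the index sum is $s_1-\frac d2+\frac d2=s_1>0$, so
\[
\|R(a,b^\ell)\|_{\dB^{s_1}_{p,1}}\lesssim\|b^\ell\|_{\dB^{s_1-\frac d2}_{r,1}}\|a\|_{\dB^{\frac d2}_{2,1}}.
\]
This yields the bound $\|b\|^{\ell,J_\varepsilon}_{\dB^{s_1}_{p,1}}\|a\|_{\dB^{d/2}_{2,1}}$, with constants independent of $\varepsilon$ because only the homogeneous estimates are used. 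Both hypotheses $s_1<d/2$ and $s_1>0$ enter exactly here.

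\emph{High part of $b$.} This is the main obstacle: $b^h$ is controlled only in $L^2$, and since $p<2$ we cannot pass from $L^2$ to $L^p$ by Bernstein. The key point is that we need only the low frequencies $j\le J_\varepsilon-1$ of the product, and $L^2\times L^2\subset L^1$. Take $\frac1p=\frac12+\frac1r$ as before. I would bound each block $\ddj(ab^h)$, $j<J_\varepsilon$, in $L^1$ and then use Bernstein from $L^1$ to $L^p$:
\[
\|\ddj f\|_{L^p}\lesssim 2^{jd(1-\frac1p)}\|\ddj f\|_{L^1}.
\]
Only the Bony terms $T_{b^h}a$ and $R(a,b^h)$ can contribute at frequencies $j<J_\varepsilon$, up to the few blocks near $J_\varepsilon$ from $T_ab^h$, which are handled the same way. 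In $R(a,b^h)$ the interacting blocks satisfy $j'\ge J_\varepsilon-1$ and $j\lesssim j'$. Then
\[
\|\ddj R\|_{L^1}\lesssim\sum_{j'\ge\max(j,J_\varepsilon)-2}\|\dot\Delta_{j'}a\|_{L^2}\|\dot\Delta_{j'}b^h\|_{L^2}\lesssim 2^{-dJ_\varepsilon}\|a\|_{\dB^{d/2}_{2,1}}\|b\|^{h,J_\varepsilon}_{\dB^{d/2}_{2,1}}.
\]
Multiplying by $2^{js_1}2^{jd(1-\frac1p)}$ and summing over $j<J_\varepsilon$ converges because $s_1+d(1-\frac1p)\ge s_1>0$. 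This gives
\[
2^{J_\varepsilon(s_1+d-\frac dp)}2^{-dJ_\varepsilon}=2^{(s_1-\frac dp)J_\varepsilon},
\]
which is exactly the claimed weight. The term $T_{b^h}a$ at $j<J_\varepsilon$ involves only $j\sim J_\varepsilon$, and the same computation applies to it. Summing these contributions gives \eqref{newproduct}.
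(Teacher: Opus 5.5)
There is a genuine gap, in the $T_a b^h$ term. The rest is fine: your estimates for $ab^\ell$, for $R(a,b^h)$ and for $T_{b^h}a$ hold. In the last two, both factors sit at frequencies $\gtrsim 2^{J_\varepsilon}$, so the $L^2\times L^2\to L^1$ bound followed by Bernstein gives exactly the weight $2^{(s_1-\frac dp)J_\varepsilon}$. (The paper gets these two pieces for the whole $b$ at once, by embedding into $\dB^{s_1+\frac d2-\frac dp}_{2,1}$.)

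The gap is the claim that the boundary blocks of $T_ab^h$ are ``handled the same way''. For $j\le J_\varepsilon-1\le j'\le J_\varepsilon+3$, an $L^1$ bound on $\dot S_{j'-1}a\,\ddjj b^h$ would require $\dot S_{j'-1}a\in L^2$. But $\dB^{d/2}_{2,1}$ only controls $\dot S_{j'-1}a$ in $L^\infty$.

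This is not a technicality: the inequality your splitting aims at, $\|ab^h\|^{\ell,J_\varepsilon}_{\dB^{s_1}_{p,1}}\lesssim 2^{(s_1-\frac dp)J_\varepsilon}\|a\|_{\dB^{d/2}_{2,1}}\|b\|^{h,J_\varepsilon}_{\dB^{d/2}_{2,1}}$, is false. Here is a counterexample:
\begin{itemize}
\item Take $b=\mathrm{Re}\,(\psi(x/R)e^{i\xi_0\cdot x})$ with $\hat\psi$ compactly supported, and $|\xi_0|\simeq 2^{J_\varepsilon}$ chosen where $\varphi(2^{-J_\varepsilon}\cdot)$ and $\varphi(2^{1-J_\varepsilon}\cdot)$ both equal $\frac12$.
\item Take $a=\phi(x/L)$ with $\phi(0)=1$, and let $2^{-J_\varepsilon}\ll R\ll L$.
\item Then $b^h=\dot\Delta_{J_\varepsilon}b$, and $\|a\|_{\dB^{d/2}_{2,1}}=\|\phi\|_{\dB^{d/2}_{2,1}}$ does not depend on $L$.
\item The block $\dot\Delta_{J_\varepsilon-1}(ab^h)\approx\dot\Delta_{J_\varepsilon-1}\dot\Delta_{J_\varepsilon}b$ has $L^p$ norm $\sim R^{d/p}$.
\item By contrast, $2^{-J_\varepsilon s_1}$ times your right-hand side is $\sim 2^{J_\varepsilon(\frac d2-\frac dp)}R^{d/2}$, so the ratio $(2^{J_\varepsilon}R)^{\frac dp-\frac d2}$ is unbounded.
\end{itemize}
The reason is that $b^h$ has a block $\dot\Delta_{J_\varepsilon-1}b^h=\dot\Delta_{J_\varepsilon-1}\dot\Delta_{J_\varepsilon}b$ inside the low range. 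Its $L^p$ norm is controlled only by $\|b\|^{\ell,J_\varepsilon}_{\dB^{s_1}_{p,1}}$, never by $L^2$ norms.

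What is missing is the paper's commutator step. Write $\ddj(\dot S_{j'-1}a\,\ddjj b)=\dot S_{j'-1}a\,\ddjj\ddj b+[\ddj,\dot S_{j'-1}a]\ddjj b$.
\begin{itemize}
\item The first term is bounded by $\|a\|_{L^\infty}\|\ddj b\|_{L^p}$ with $j\le J_\varepsilon-1$, so it is charged to the low-frequency norm of $b$.
\item The commutator gains $2^{-j}$. Write it as $\int h_j(z)\big(\dot S_{j'-1}a(x-z)-\dot S_{j'-1}a(x)\big)\ddjj b(x-z)\,dz$, then apply Minkowski and H\"older in $x$. This gives the bound $2^{-j}\|\nabla\dot S_{j'-1}a\|_{L^{2p/(2-p)}}\|\ddjj b\|_{L^2}$.
\item That yields the weight $2^{(s_1-\frac dp)J_\varepsilon}$, provided $\|\nabla\dot S_{j'-1}a\|_{L^{2p/(2-p)}}\lesssim 2^{j'(1+\frac d2-\frac dp)}\|a\|_{\dB^{d/2}_{2,1}}$.
\end{itemize}
Be aware that this last bound needs $1+\frac d2-\frac dp\ge 0$, i.e.\ $p\ge\frac{2d}{d+2}$, because the low frequencies of $\nabla a$ must be summed in $L^{2p/(2-p)}$. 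The paper does not address the range $1\le p<\frac{2d}{d+2}$ when $d\ge 3$. So with this patch your argument matches the paper's, including that limitation.
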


\begin{proof}
Using Bony's paraproduct decomposition, we have
\begin{equation}\nonumber
\begin{aligned}
ab=T_{a}b+R[a,b]+T_{b}a\quad \mbox{with}\quad
T_{a}b\triangleq\sum_{j'\in \Z} \dot S_{j'-1}a \ddjj b\quad\mbox{and}\quad
R[a,b]\triangleq\sum_{|j'-j''|\leq 1} \dot\Delta_{j''} a \ddjj b.
\end{aligned}
\end{equation}

First, we bound $T_{a}b$. It is clear that
\begin{equation}\nonumber
\begin{aligned}
\|T_{a} b\|_{\dB_{p,1}^{s_1}}^{\ell,J_\varepsilon}
\leq
\sum_{j\leq J-2\atop |j-j'|\leq 1}2^{s_1 j} \|\dot S_{j'-1} a \ddj \ddjj b\|_{L^p}
+\sum_{j\leq J-2\atop |j-j'|\leq 4} 2^{s_1 j}\|[\ddj, \dot S_{j'-1} a] \ddjj b\|_{L^p}.
\end{aligned}
\end{equation}
The embedding $\dot B_{p,1}^{\frac{d}{p}}\hookrightarrow L^{\infty}$ leads to
\begin{equation}\nonumber
\begin{aligned}
\sum_{j\leq J-2\atop |j-j'|\leq 1} 2^{s_1j}\|\dot S_{j'-1} a \ddj \ddjj b\|_{L^p}
\lesssim
\|\dot S_{j'-1} a\|_{L^\infty}\sum_{j\leq J-2} 2^{s_1j} \|\ddj b\|_{L^p}
\lesssim
\|a\|_{\dB_{2,1}^{\frac{d}{2}}}\|b\|_{\dB_{p,1}^{s_1}}^{\ell,J_\varepsilon}.
\end{aligned}
\end{equation}
Note that
\begin{equation}\nonumber
\begin{aligned}
\sum_{j\leq J-2\atop |j-j'|\leq 4} 2^{s_1 j}\|[\ddj, \dot S_{j'-1} a] \ddjj b\|_{L^p}
\lesssim\Big(\sum\limits_{j'\leq J-2}2^{s_1j'}
+ \sum\limits_{J -2 \leq j'\leq J+2} \Big)2^{s_1j'}
\|[\ddj, S_{j'-1} a] \Delta_{j'} b\|_{L^p}.
\end{aligned}
\end{equation}
The Young-like inequality ensures that the commutator estimate in \cite[Lemma 2.97]{chemin} also holds for $[\ddj, S_{j'-1} a] \Delta_{j'} b$
and therefore we have
\begin{equation}\nonumber
\begin{aligned}
\sum\limits_{j'\leq J-2}2^{s_1j'}
\|[\ddj, S_{j'-1} a] \Delta_{j'} b\|_{L^p}
\lesssim
\|\nabla a\|_{\dB_{\infty,1}^{-1}}
\|b\|_{\dB_{p,1}^{s_1}}^{\ell,J_\varepsilon}
\lesssim
\|a\|_{\dB_{2,1}^{\frac{d}{2}}}
\|b\|_{\dB_{p,1}^{s_1}}^{\ell,J_\varepsilon}.
\end{aligned}
\end{equation}
Similarly, as $\frac{2p}{2-p}\leq 2$ when $1\leq p\leq 2$, one has
\begin{equation}\nonumber
\begin{aligned}
&\sum\limits_{J -2 \leq j'\leq J+2}2^{s_1j'}
\|[\ddj, S_{j'-1} a] \Delta_{j'} b\|_{L^p}
\lesssim
2^{(s_1-\frac{d}{p})J_\varepsilon}
\|b\|_{\dB_{2,1}^{\frac{d}{2}}}^{h,J_\varepsilon} \|a\|_{\dB_{\frac{2p}{2-p},1}^{\frac{d}{p}-\frac{d}{2}}}^{h,J_\varepsilon}\lesssim 2^{(s_1-\frac{d}{p})J_\varepsilon}
\|b\|_{\dB_{2,1}^{\frac{d}{2}}}^{h,J_\varepsilon} \|a\|_{\dB_{2,1}^{\frac{d}{2}}}^{h,J_\varepsilon}.
\end{aligned}
\end{equation}
Hence, it follows that
\begin{equation}\nonumber
\begin{aligned}
\|T_{a} b\|_{\dB_{p,1}^{s_1}}^{\ell,J_\varepsilon} \lesssim \|a\|_{\dB_{2,1}^{\frac{d}{2}}}\|b\|_{\dB_{p,1}^{s_1}}^{\ell,J_\varepsilon}
+2^{(s_1-\frac{d}{p})J_\varepsilon}\|b\|_{\dB_{2,1}^{\frac{d}{2}}}^{h,J_\varepsilon} \|a\|_{\dB_{2,1}^{\frac{d}{2}}}^{h,J_\varepsilon}
\lesssim
\left(\|b\|_{\dB_{p,1}^{s_1}}^{\ell,J_\varepsilon}+ 2^{(s_1-\frac{d}{p})J_\varepsilon}\|b\|_{\dB_{2,1}^{\frac{d}{2}}}^{h,J_\varepsilon} \right)\|a\|_{\dB_{2,1}^{\frac{d}{2}}}.
\end{aligned}
\end{equation}

For $T_{b} a$, because $s_1-\frac{d}{2}<0$, it follows the classical paraproduct estimate (see \cite{Danchin}) that
\begin{equation}\nonumber
\begin{aligned}
\|T_{b} a\|_{\dB_{p,1}^{s_1}}^{\ell,J_\varepsilon}
\lesssim
\|b\|_{\dB_{\frac{2p}{2-p},1}^{s_1-\frac{d}{2}}}\|a\|_{\dB_{2,1}^{\frac{d}{2}}}
\lesssim
\|b\|_{\dB_{2,1}^{s_1+\frac{d}{2}-\frac{d}{p}}}\|a\|_{\dB_{2,1}^{\frac{d}{2}}}
\lesssim
\left(\|b\|_{\dB_{p,1}^{s_1}}^{\ell,J_\varepsilon}+ 2^{(s_1-\frac{d}{p})J_\varepsilon}\|b\|_{\dB_{2,1}^{\frac{d}{2}}}^{h,J_\varepsilon} \right)\|a\|_{\dB_{2,1}^{\frac{d}{2}}}.
\end{aligned}
\end{equation}

Finally, by classical remainder estimates (see \cite[Theorem 2.85]{chemin}) we can directly obtain
\begin{equation}\nonumber
\begin{aligned}
\|R[a,b]\|_{\dB_{p,1}^{s_1}}^{\ell,J_\varepsilon}
\lesssim
\|b\|_{\dB_{\frac{2p}{2-p},1}^{s_1-\frac{d}{2}}}\|a\|_{\dB_{2,1}^{\frac{d}{2}}}
\lesssim
\left(\|b\|_{\dB_{p,1}^{s_1}}^{\ell,J_\varepsilon}+ 2^{(s_1-\frac{d}{p})J_\varepsilon}\|b\|_{\dB_{2,1}^{\frac{d}{2}}}^{h,J_\varepsilon} \right)\|a\|_{\dB_{2,1}^{\frac{d}{2}}}.
\end{aligned}
\end{equation}
Adding above three inequality together, we can finally obtain \eqref{newproduct}.
\end{proof}
For simplicity, we define $\cX(T):=\|(n,\bv)\|_{E_T^{J\varepsilon}}$. The proof of Theorem \ref{thm} reduces to establishing a global-in-time a priori estimate. Specifically, we claim that if
\begin{equation*}
\begin{split}
\|n\|_{L^\infty}+\|\bv\|_{L^\infty} \ll 1 \quad \text{on} \quad [0,T],
\end{split}
\end{equation*}
then there exists a constant $C$, independent of $T$ and $\varepsilon$, such that
\begin{equation}\label{APrioriEstimate}
\cX(T)\leq C\big( \cX(0)+\cX^2(T)\big) \qquad\text{for all } \varepsilon>0.
\end{equation}

The proof of the inequality \eqref{APrioriEstimate} is divided into two steps.

\vspace{0.5pc}
{\bf Step 1: The low-frequency estimate in the $L^p$ framework}
\vspace{0.5pc}

Defining the effective velocity as $\bz\triangleq \varepsilon^{-1}\bv+\nabla n$ (cf. the analogous case for Navier-Stokes equations in \cite{Haspot2}), and operating on (\ref{1.2}) with $\ddj$, yields
\begin{equation}\label{2.1}
\begin{split}
\left\{\aligned
&\d_t \ddj n - \varepsilon\Delta \ddj n =- \varepsilon\div\ddj\bz- \ddj (\bv\cdot \nabla n)-\ddj(G(n)\div \bv)\\
&\d_t\ddj \bz+\frac{1}{\varepsilon} \ddj\bz = H
\endaligned\right.
\end{split}
\end{equation}
with
$$H=\varepsilon
\left(\nabla\Delta \ddj n-\nabla \div\ddj\bz\right)
- \ddj \nabla(\bv\cdot \nabla n)
-\nabla\ddj(G(n)\div \bv)
-\frac{1}{\varepsilon}\ddj(\bv\cdot\nabla\bv).$$
It is clear that the equation on $n$ is a heat equation and the equation on $\bz$ is a damped equation, hence the standard estimate (see \cite{BShouZhang,XuZhang}) implies that
\begin{equation}\label{LowEst3}
\begin{split}
&\|(n,\varepsilon \bz)\|_{\tL_T^\infty(\dot B_{p,1}^{\frac{d}{p}})}^{\ell,J_\varepsilon}
+\bigg(\varepsilon\|n\|_{\tL_T^1(\dot B_{p,1}^{\frac{d}{p}+2})}^{\ell,J_\varepsilon}
+\|\bz\|_{\tL_T^1(\dot B_{p,1}^{\frac{d}{p}})}^{\ell,J_\varepsilon}\bigg)\\
&\quad\leq
C\bigg(\|(n_0, \varepsilon \bz_0)\|_{\dot B_{p,1}^{\frac{d}{p}}}^{\ell,J_\varepsilon}
+(1+2^{k})\|(\bv\cdot \nabla n,G(n)\div \bv,\bv\cdot \nabla\bv)\|_{\tL_T^1(\dB_{p,1}^{\frac{d}{p}})}^{\ell,J_\varepsilon}\bigg)
\end{split}
\end{equation}
with $C$ depending only on $d,p,k$.

Next, we will deal with the nonlinear part. For the first nonlinear term, it follows the Lemma \ref{NonClassicalProLaw1} that
\begin{equation*}
\begin{split}
\|\bv\cdot \nabla n\|_{\tL_T^1(\dB_{p,1}^{\frac{d}{p}})}^{\ell,J_\varepsilon}
&\lesssim
\left(
\|\nabla n\|_{\tL_T^2(\dB_{p,1}^{\frac{d}{p}})}^{\ell,J_\varepsilon}
+ \|\nabla n\|_{\tL_T^2(\dB_{2,1}^{\frac{d}{2}})}^{h,J_\varepsilon}
\right)
\|\bv\|_{\tL_T^2(\dB_{2,1}^{\frac{d}{2}})}.
\end{split}
\end{equation*}
Then the interpolation inequalities (see Lemma 3.2 in \cite{XuZhang}) implies
\begin{equation}\label{NL1}
\begin{split}
\|\bv\cdot \nabla n\|_{\tL_T^1(\dB_{p,1}^{\frac{d}{p}})}^{\ell,J_\varepsilon}
&\lesssim
\left(\left(\| n\|_{\tL_T^\infty(\dB_{p,1}^{\frac{d}{p}})}^{\ell,J_\varepsilon}
\|n\|_{\tL_T^1(\dB_{p,1}^{\frac{d}{p}+2})}^{\ell,J_\varepsilon}\right)^{\frac{1}{2}}
+\left(\|n\|_{\tL_T^\infty(\dB_{2,1}^{\frac{d}{2}+1})}^{h,J_\varepsilon}
\|n\|_{\tL_T^1(\dB_{2,1}^{\frac{d}{2}+1})}^{h,J_\varepsilon}\right)^{\frac{1}{2}}
\right)\\
&\quad
\times\left(\|\bv\|_{\tL_T^2(\dB_{p,1}^{\frac{d}{p}})}^{\ell,J_\varepsilon}
+\varepsilon\left(\|\bv\|_{\tL_T^\infty(\dB_{2,1}^{\frac{d}{2}+1})}^{h,J_\varepsilon}
\|\bv\|_{\tL_T^1(\dB_{2,1}^{\frac{d}{2}+1})}^{h,J_\varepsilon}\right)^{\frac{1}{2}}\right)\\
&\lesssim
\cX^2(T)+\cX(T)\varepsilon^{-\frac{1}{2}}\|\bv\|_{\tL_T^2(\dB_{p,1}^{\frac{d}{p}})}^{\ell,J_\varepsilon}.
\end{split}
\end{equation}
And similarly, for the convection term we can deduce
\begin{equation}\label{NL2}
\begin{split}
\|\bv\cdot \nabla \bv\|_{\tL_T^1(\dB_{p,1}^{\frac{d}{p}})}^{\ell,J_\varepsilon}
\lesssim
\cX^2(T).
\end{split}
\end{equation}
For the last nonlinear term, the inequality \eqref{newproduct} implies
\begin{equation*}
\begin{split}
\|G(n)\div \bv\|_{\tL_T^1(\dB_{p,1}^{\frac{d}{p}})}^{\ell,J_\varepsilon}
&\lesssim
\|G(n)\|_{\tL_T^\infty(\dB_{2,1}^{\frac{d}{2}})}
\left(\|\div \bv\|_{\tL_T^1(\dB_{p,1}^{\frac{d}{p}})}^{\ell,J_\varepsilon}
+ \|\div \bv\|_{\tL_T^1(\dB_{2,1}^{\frac{d}{2}})}^{h,J_\varepsilon}\right).
\end{split}
\end{equation*}
By the classical composition estimate (see \cite{chemin}), we can obtain
\begin{equation}\label{NL3}
\begin{split}
\|G(n)\div \bv\|_{\tL_T^1(\dB_{p,1}^{\frac{d}{p}})}^{\ell,J_\varepsilon}
\lesssim
\|n\|_{\tL_T^\infty(\dB_{2,1}^{\frac{d}{2}})}
\left(\|\bv\|_{\tL_T^1(\dB_{p,1}^{\frac{d}{p}+1})}^{\ell,J_\varepsilon} +\|\bv\|_{\tL_T^1(\dB_{2,1}^{\frac{d}{2}+1})}^{h,J_\varepsilon}\right)
\lesssim
\cX^2(T).
\end{split}
\end{equation}

Inserting \eqref{NL1}-\eqref{NL3} into \eqref{LowEst3}, we can obtain
\begin{equation*}
\begin{split}
&\|(n, \varepsilon \bz)\|_{\tL_T^\infty(\dot B_{p,1}^{\frac{d}{p}})}^{\ell,J_\varepsilon}
+\bigg(\varepsilon\|n\|_{\tL_T^1(\dot B_{p,1}^{\frac{d}{p}+2})}^{\ell,J_\varepsilon}
+\|\bz\|_{\tL_T^1(\dot B_{p,1}^{\frac{d}{p}})}^{\ell,J_\varepsilon}\bigg)\\
&\quad\leq C\bigg(
\|(n_0, \varepsilon \bz_0)\|_{\dot B_{p,1}^{\frac{d}{p}}}^{\ell,J_\varepsilon}
+\cX^2(T)+\cX(T)\varepsilon^{-\frac{1}{2}}
\|\bv\|_{\tL_T^2(\dB_{p,1}^{\frac{d}{p}})}^{\ell,J_\varepsilon}\bigg) ,
\end{split}
\end{equation*}
which eventually leads to
\begin{equation}\label{LowEst4}
\begin{split}
&\|n\|_{\tL_T^\infty(\dot B_{p,1}^{\frac{d}{p}})}^{\ell,J_\varepsilon}
+\|\bv\|_{\tL_T^\infty(\dot B_{p,1}^{\frac{d}{p}})}^{\ell,J_\varepsilon}
+\bigg(\varepsilon\|n\|_{\tL_T^1(\dot B_{p,1}^{\frac{d}{p}+2})}^{\ell,J_\varepsilon}+\|\varepsilon^{-1}\bv+\nabla n\|_{\tL_T^1(\dot B_{p,1}^{\frac{d}{p}})}^{\ell,J_\varepsilon}
+\|\bv\|_{\tL_T^1(\dot B_{p,1}^{\frac{d}{p}+1})}^{\ell,J_\varepsilon}
+\varepsilon^{-\frac{1}{2}}
\|\bv\|_{\tL_T^2(\dB_{p,1}^{\frac{d}{p}})}^{\ell,J_\varepsilon}\bigg)\\
&\quad\lesssim
\cX(0)+\cX^2(T).
\end{split}
\end{equation}

It is known that most hyperbolic systems with no $0$-order terms are ill-posed in $L^p$ spaces with $p\neq2$ (see \cite{Brenner}), and the
the $0$-th order damping plays a key role in above Low-frequency analysis.
Next, we turn to the high-frequency analysis. In that case, the formal eigenvalue analysis (see \cite{BaratDanchin}) implies that it is only suitable within the $L^2$ framework. Therefore, we use the energy method which is a classical method in dissipative system (such as \cite{Zuazua,XuKawaARM14,Li,LiZ})

\vspace{0.5pc}
{\bf Step 2: The high-frequency estimates in the $L^2$ framework}
\vspace{0.5pc}

First, we localize system (\ref{1.2}) by applying the operator $\ddj$ and obtaining
\begin{eqnarray*}
\left\{\aligned
&\d_t \ddj n + \div \ddj\bv + G(n)\div \ddj \bv+ \bv \cdot  \nabla \ddj n  = R_j^1+R_j^2 \\
&\d_t\ddj \bv +\nabla \ddj n+ \bv\cdot \nabla \ddj\bv+\frac{1}{\varepsilon} \ddj \bv= R_j^3
\endaligned\right.
\end{eqnarray*}
with the commutators defined as
\begin{eqnarray*}
\begin{split}
R_j^1:=-\ddj(\bv \cdot  \nabla n)+\bv \cdot  \nabla \ddj n,\
R_j^2:=-\ddj(G(n)\div \bv)+  G(n)\div \ddj \bv,\
R_j^3:=-\ddj(\bv\cdot \nabla\bv)+\bv\cdot \nabla \ddj\bv.
\end{split}
\end{eqnarray*}

By using the $L^2$ weighted energy method, as in \cite{BaratDanchin, XuZhang}, one can deduce that
\begin{equation}\label{dissi4}
\begin{split}
&\frac{1}{2}\frac{d}{dt}\varepsilon\cL_j^2+\|(\nabla\ddj n,\nabla\ddj \bv)\|_{L^2}^2\\
&\quad\lesssim
 \varepsilon\|(\d_tG(n),\nabla G(n), \nabla \bv)\|_{L^\infty}\cL_j^2
  +\cL_j \varepsilon \sum_{i=1}^32^j\|R_j^i\|_{L^2}
\end{split}
\end{equation}
with the Lyapunov functional $\cL_j^2$
$$\varepsilon\cL_j^2:= 2^{2j}\varepsilon\|(\ddj n, \ddj \bv)\|_{L^2}^2+2\tilde{c} \int \ddj \bv \cdot\nabla\ddj n dx
\approx 2^{2j}\varepsilon\|(\ddj n, \ddj \bv)\|_{L^2}^2\quad\mbox{for}\quad \tilde{c}2^{-k}<1.$$
And then it follows from the Lemma 5.1. in \cite{BaratDanchin2} that
\begin{equation}\label{HighEst}
\begin{split}
&\varepsilon\|(n, \bv)\|_{\tL_T^\infty(\dB_{2,1}^{\frac{d}{2}+1})}^{h,J_\varepsilon}
+\|(n, \bv)\|_{\tL_T^1(\dB_{2,1}^{\frac{d}{2}+1})}^{h,J_\varepsilon}\\
&\quad\lesssim
\cX(0)
+\varepsilon \|(\d_t G(n), \nabla G(n), \nabla \bv)\|_{L_T^\infty(L^\infty)}\cX(T)
+\varepsilon\int_0^T \sum_{i=1}^3\sum_{j\geq J_\varepsilon}2^{(\frac{d}{2}+1)j}\|R_j^i\|_{L^2}dt.
\end{split}
\end{equation}

Since $2^{J_\varepsilon}\approx 2^{k}\varepsilon^{-1}$, by the spatial embedding
$\dB_{p,1}^{\frac{d}{p}}(\R^d)\hookrightarrow \dB_{2,1}^{\frac{d}{2}}(\R^d)\hookrightarrow L^\infty(\R^d)$, the classical estimation on smooth functions (see Corollary 2.65 in \cite{chemin}) and \eqref{HLEst} we can obtain that
\begin{equation}\label{HNonLinEst1}
\begin{split}
\varepsilon\|(\nabla G(n),\nabla \bv)\|_{L_T^\infty(L^\infty)}
\lesssim
\varepsilon\|(n, \bv)\|_{\tL_T^\infty(\dB_{2,1}^{\frac{d}{2}+1})}
\lesssim\cX(T).
\end{split}
\end{equation}
By the continuity equation, it is not difficult to get
\begin{equation}\label{HNonLinEst2}
\begin{split}
\varepsilon\|\d_t G(n)\|_{L_T^\infty(L^\infty)}
&\lesssim
\varepsilon \|\div \bv+\bv \cdot  \nabla n
+ G(n)\div\bv\|_{\tL_T^\infty(\dB_{2,1}^{\frac{d}{2}})}\\
&\lesssim
\varepsilon\|\bv\|_{\tL_T^\infty(\dB_{2,1}^{\frac{d}{2}+1})}
+\|\bv\|_{\tL_T^\infty(\dB_{2,1}^{\frac{d}{2}})}\varepsilon \|\nabla n\|_{\tL_T^\infty(\dB_{2,1}^{\frac{d}{2}})}\
+\|n\|_{\tL_T^\infty(\dB_{2,1}^{\frac{d}{2}})}\varepsilon \|\div \bv\|_{\tL_T^\infty(\dB_{2,1}^{\frac{d}{2}})}\\
&\lesssim
\cX(T)+\cX^2(T).
\end{split}
\end{equation}

Next, we bound the commutator $R_j^i$ ($i=1,2,3$), by the Lemma 2.100 in \cite{chemin} we can deduce
\begin{equation}\label{HNonLinEst3}
\begin{split}
&\varepsilon\int_0^T \sum_{i=1}^3\sum_{j\geq J_\varepsilon}2^{(\frac{d}{2}+1)j}
\|R_j^i\|_{L^2}dt
\lesssim
\varepsilon\|\bv\|_{\tL_T^1(\dB_{2,1}^{\frac{d}{2}+1})}
\left(\|n\|_{\tL_T^\infty(\dB_{2,1}^{\frac{d}{2}+1})}
+\|\bv\|_{\tL_T^\infty(\dB_{2,1}^{\frac{d}{2}+1})}
\right)
\lesssim
\cX^2(T).
\end{split}
\end{equation}

Substitute \eqref{HNonLinEst1}-\eqref{HNonLinEst3} into \eqref{HighEst} we can finally obtain
\begin{equation}\label{HighEst2}
\begin{split}
&\varepsilon\|(n, \bv)\|_{\tL_T^\infty(\dB_{2,1}^{\frac{d}{2}+1})}^{h,J_\varepsilon}
+\|(n, \bv)\|_{\tL_T^1(\dB_{2,1}^{\frac{d}{2}+1})}^{h,J_\varepsilon}
+\varepsilon^{-\frac{1}{2}}\|\bv\|_{\tL_T^2(\dB_{p,1}^{\frac{d}{p}})}^{h,J_\varepsilon}\\
&\quad+|\varepsilon^{-1}\bv+\nabla n\|_{\tL_T^1(\dot B_{p,1}^{\frac{d}{p}})}^{h,J_\varepsilon}
\lesssim
\cX(0)+  \cX^2(T)+\cX^3(T).
\end{split}
\end{equation}

The estimate \eqref{APrioriEstimate} follows immediately from a combination of \eqref{HighEst2} and \eqref{LowEst4}. Applying a standard bootstrap argument, as detailed in \cite{BShouZhang}, establishes the global well-posedness of the solution. Consequently, Theorem \ref{thm} is proved.

\section*{Acknowledgments}
J. Z. Zhang was supported by the Natural Science Foundation of Shandong Province , China (ZR2024QA003); X. L. Li was supported by the National
Natural Science Foundation of China (12571178).

\bibliographystyle{amsplain}

\end{document}